\documentclass{article}%
\usepackage{amsmath}
\usepackage{amsfonts}
\usepackage{amssymb}
\usepackage{graphicx}%
\setcounter{MaxMatrixCols}{30}
\providecommand{\U}[1]{\protect \rule{.1in}{.1in}}
\newtheorem{theorem}{Theorem}

\newtheorem{definition}[theorem]{Definition}
\newtheorem{example}[theorem]{Example}

\newtheorem{lemma}[theorem]{Lemma}

\newtheorem{problem}[theorem]{Problem}
\newtheorem{proposition}[theorem]{Proposition}
\newtheorem{remark}[theorem]{Remark}

\newenvironment{proof}[1][Proof]{\noindent \textbf{#1.} }{\  $\Box$}
\begin{document}

\title{The Independence under Sublinear Expectations}
\author{Mingshang HU\\School of Mathematics\\Shandong
University\\250100, Jinan, China\\humingshang@sdu.edu.cn}

\maketitle

\begin{abstract}
We show that, for two non-trivial random variables $X$ and $Y$ under
a sublinear expectation space, if $X$ is independent from $Y$ and
$Y$ is independent from $X$, then $X$ and $Y$ must be maximally
distributed.
\end{abstract}

\section{Introduction}
Peng \cite{Peng2004,Peng2005,Peng2006a,Peng2006b} introduced the
important notions of distributions and independence under the
sublinear expectation framework. Like classical linear expectations,
the independence play a key role in the sublinear analysis.

Unfortunately, $Y$ is independent from $X$ does not imply that $X$
is independent from $Y$. But if $X$ and $Y$ are maximally
distributed, this holds true. A natural problem is whether the
maximal distribution is the only distribution? In this paper, we
give an affirmative answer to this problem.

This paper is organized as follows: in Section 2, we recall some
basic results of sublinear expectations. The main result is given
and proved in Section 3.

\section{Basic settings}

We present some preliminaries in the theory of sublinear
expectations. More details of this section can be found in [7-14].

Let $\Omega$ be a given set and let $\mathcal{H}$ be a linear space of real
functions defined on $\Omega$ such that $c\in \mathcal{H}$ for all constants
$c$ and $|X|\in \mathcal{H}$ if $X\in \mathcal{H}$. We further suppose that if
$X_{1},\ldots,X_{n}\in \mathcal{H}$, then $\varphi(X_{1},\cdots,X_{n}%
)\in \mathcal{H}$ for each $\varphi \in C_{b.Lip}(\mathbb{R}^{n})$, where
$C_{b.Lip}(\mathbb{R}^{n})$ denotes the space of bounded and Lipschitz
functions. $\mathcal{H}$ is considered as the space of random variables.

\begin{definition}
A sublinear expectation $\mathbb{\hat{E}}$ on $\mathcal{H}$ is a functional
$\mathbb{\hat{E}}:\mathcal{H}\rightarrow \mathbb{R}$ satisfying the following
properties: for all $X,$ $Y\in \mathcal{H}$, we have

\begin{description}
\item[(a)] Monotonicity: $\mathbb{\hat{E}}[X]\geq \mathbb{\hat{E}}[Y]$ if
$X\geq Y$.

\item[(b)] Constant preserving: $\mathbb{\hat{E}}[c]=c$ for $c\in \mathbb{R}$.

\item[(c)] Sub-additivity: $\mathbb{\hat{E}}[X+Y]\leq \mathbb{\hat{E}%
}[X]+\mathbb{\hat{E}}[Y]$.

\item[(d)] Positive homogeneity: $\mathbb{\hat{E}}[\lambda X]=\lambda
\mathbb{\hat{E}}[X]$ for $\lambda \geq0$.
\end{description}

The triple $(\Omega,\mathcal{H},\mathbb{\hat{E}})$ is called a sublinear
expectation space (compare with a probability space $(\Omega,\mathcal{F},P)$).
\end{definition}

\begin{remark}
If the inequality in (c) is equality, then $\mathbb{\hat{E}}$ is a linear
expectation on $\mathcal{H}$. We recall that the notion of the above sublinear
expectations was systematically introduced by Artzner, Delbaen, Eber and Heath
\cite{ADEH1, ADEH2}, in the case where $\Omega$ is a finite set, and by
Delbaen \cite{Delb} for the general situation with the notation of risk
measure: $\rho(X):=\mathbb{\hat{E}}[-X]$. See also Huber \cite{Huber} for even
earlier study of this notion $\mathbb{\hat{E}}$ (called the upper expectation
$\mathbf{E}^{\ast}$ in Ch. 10 of \cite{Huber}).
\end{remark}

\begin{remark}
It is easy to deduce from (d) that
\[
\mathbb{\hat{E}}[\lambda X]=\lambda^{+} \mathbb{\hat{E}}[X]+\lambda^{-}
\mathbb{\hat{E}}[-X] \  \  \text{for}\  \lambda \in \mathbb{R}.
\]

\end{remark}

\begin{remark}
Let $\{ E_{\theta}:\theta \in \Theta \}$ be a family of linear expectations
defined on $\mathcal{H}$. Then
\[
\mathbb{\hat{E}}[X]:=\sup_{\theta \in \Theta}E_{\theta}[X]\  \  \text{for}%
\ X\in \mathcal{H}
\]
is a sublinear expectation. In fact, every sublinear expectation has
this kind of representation (see Peng \cite{Peng2007, Peng2008}).
\end{remark}

Let $X=(X_{1},\ldots,X_{n})$, $X_{i}\in \mathcal{H}$, denoted by $X\in
\mathcal{H}^{n}$, be a given $n$-dimensional random vector on a sublinear
expectation space $(\Omega,\mathcal{H},\mathbb{\hat{E}})$. We define a
functional on $C_{b.Lip}(\mathbb{R}^{n})$ by
\[
\mathbb{\hat{F}}_{X}[\varphi]:=\mathbb{\hat{E}}[\varphi(X)]\text{ \ for all
}\varphi \in C_{b.Lip}(\mathbb{R}^{n}).
\]
The triple $(\mathbb{R}^{n},C_{b.Lip}(\mathbb{R}^{n}),\mathbb{\hat{F}}%
_{X}[\cdot])$ forms a sublinear expectation space. $\mathbb{\hat{F}}_{X}$ is
called the distribution of $X$.

\begin{definition}
A random vector $X\in \mathcal{H}^{n}$ is said to have distributional
uncertainty if the distribution $\mathbb{\hat{F}}_{X}$ is not a linear expectation.
\end{definition}

The following simple property is very useful in sublinear analysis.

\begin{proposition}
\label{pro1}Let $X,$ $Y\in \mathcal{H}$ be such that $\hat{\mathbb{E}}%
[Y]=-\hat{\mathbb{E}}[-Y]$. Then we have%
\[
\hat{\mathbb{E}}[X+Y]=\hat{\mathbb{E}}[X]+\hat{\mathbb{E}}[Y].
\]
In particular, if $\hat{\mathbb{E}}[Y]=\hat{\mathbb{E}}[-Y]=0$, then
$\hat{\mathbb{E}}[X+Y]=\hat{\mathbb{E}}[X]$.
\end{proposition}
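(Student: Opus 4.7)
The plan is to establish the identity by two applications of sub-additivity, one giving each direction of the required equality, and then to use the hypothesis $\hat{\mathbb{E}}[Y] = -\hat{\mathbb{E}}[-Y]$ to collapse the two bounds.

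First I would obtain the upper bound. This is immediate: property (c) in the definition of a sublinear expectation gives $\hat{\mathbb{E}}[X+Y] \leq \hat{\mathbb{E}}[X] + \hat{\mathbb{E}}[Y]$ with no use of the hypothesis on $Y$.

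For the lower bound, the key trick is to write $X = (X+Y) + (-Y)$ and apply sub-additivity again. This yields $\hat{\mathbb{E}}[X] \leq \hat{\mathbb{E}}[X+Y] + \hat{\mathbb{E}}[-Y]$, which rearranges to $\hat{\mathbb{E}}[X+Y] \geq \hat{\mathbb{E}}[X] - \hat{\mathbb{E}}[-Y]$. Now I invoke the hypothesis $\hat{\mathbb{E}}[-Y] = -\hat{\mathbb{E}}[Y]$ to replace $-\hat{\mathbb{E}}[-Y]$ by $\hat{\mathbb{E}}[Y]$, producing $\hat{\mathbb{E}}[X+Y] \geq \hat{\mathbb{E}}[X] + \hat{\mathbb{E}}[Y]$. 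Combining the two inequalities gives the stated equality, and the special case where both $\hat{\mathbb{E}}[Y]$ and $\hat{\mathbb{E}}[-Y]$ vanish follows by setting $\hat{\mathbb{E}}[Y]=0$ in the identity.

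There is no real obstacle here: the entire argument is a standard two-line manipulation of sub-additivity, and the only subtlety is noticing that one should expand $X$ rather than $X+Y$ in the second application. No appeal to representation as a supremum of linear expectations (Remark following Definition~1) is needed, though that would also yield the result after observing that every representing linear expectation attains a common value on $Y$.
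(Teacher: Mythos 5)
Your proof is correct and follows the same route as the paper: sub-additivity gives the upper bound directly, and applying sub-additivity to the decomposition $X=(X+Y)+(-Y)$ gives $\hat{\mathbb{E}}[X+Y]\geq \hat{\mathbb{E}}[X]-\hat{\mathbb{E}}[-Y]$, which equals $\hat{\mathbb{E}}[X]+\hat{\mathbb{E}}[Y]$ by hypothesis. Nothing to add.
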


\begin{proof}
It is simply because we have $\hat{\mathbb{E}}[X+Y]\leq \hat{\mathbb{E}%
}[X]+\hat{\mathbb{E}}[Y]$ and%
\[
\hat{\mathbb{E}}[X+Y]\geq \hat{\mathbb{E}}[X]-\hat{\mathbb{E}}[-Y]=\hat
{\mathbb{E}}[X]+\hat{\mathbb{E}}[Y].
\]

\end{proof}

Noting that $\hat{\mathbb{E}}[c]=-\hat{\mathbb{E}}[-c]=c$ for all
$c\in \mathbb{R}$, we immediately have
\[
\hat{\mathbb{E}}[X+c]=\hat{\mathbb{E} }[X]+c.
\]

The following notion of independence plays an important role in the sublinear
expectation theory.

\begin{definition}
Let $(\Omega,\mathcal{H},\mathbb{\hat{E}})$ be a sublinear expectation space.
A random vector $Y=(Y_{1},\cdots,Y_{n})\in \mathcal{H}^{n}$ is said to be
independent from another random vector $X=(X_{1},\cdots,X_{m})\in
\mathcal{H}^{m}$ under $\mathbb{\hat{E}}[\cdot]$ if for each test function
$\varphi \in C_{b.Lip}(\mathbb{R}^{m}\times \mathbb{R}^{n})$ we have
\[
\mathbb{\hat{E}}[\varphi(X,Y)]=\mathbb{\hat{E}}[\mathbb{\hat{E}}%
[\varphi(x,Y)]_{x=X}].
\]

\end{definition}

\begin{remark}
Under a sublinear expectation space, $Y$ is independent from $X$ means that
the distributional uncertainty of $Y$ does not change after the realization of
$X=x$. Or, in other words, the \textquotedblleft conditional sublinear
expectation\textquotedblright \ of $Y$ knowing $X$ is $\mathbb{\hat{E}}%
[\varphi(x,Y)]_{x=X}$. In the case of linear expectation, this notion of
independence is just the classical one.
\end{remark}

It is important to note that under sublinear expectations the condition
\textquotedblleft$Y$ is independent from $X$\textquotedblright \ does not imply
automatically that \textquotedblleft$X$ is independent from $Y$%
\textquotedblright. See the following example:

\begin{example}
We consider a case where $\mathbb{\hat{E}}$ is a sublinear expectation and
$X,Y\in \mathcal{H}$ are identically distributed with $\mathbb{\hat{E}%
}[X]=\mathbb{\hat{E}}[-X]=0$ and $\bar{\sigma}^{2}=\mathbb{\hat{E}}%
[X^{2}]>\underline{\sigma}^{2}=-\mathbb{\hat{E}}[-X^{2}]$. We also assume that
$\mathbb{\hat{E}}[|X|]=\mathbb{\hat{E}}[X^{+}+X^{-}]>0$, thus $\mathbb{\hat
{E}}[X^{+}]=\frac{1}{2}\mathbb{\hat{E}}[|X|+X]=$$\frac{1}{2}\mathbb{\hat{E}%
}[|X|]>0$. In the case where $Y$ is independent from $X$, we have%
\[
\mathbb{\hat{E}}[XY^{2}]=\mathbb{\hat{E}}[X^{+}\bar{\sigma}^{2}-X^{-}%
\underline{\sigma}^{2}]=(\bar{\sigma}^{2}-\underline{\sigma}^{2}%
)\mathbb{\hat{E}}[X^{+}]>0.
\]
But if $X$ is independent from $Y$ we have%
\[
\mathbb{\hat{E}}[XY^{2}]=0.
\]

\end{example}

The following is a representation theorem of the distribution of a
random vector (see \cite{DHP,H-P,Peng2010}).

\begin{theorem}
\label{th0}Let $X\in \mathcal{H}^{n}$ be a $n$-dimensional random vector on a
sublinear expectation space $(\Omega,\mathcal{H},\mathbb{\hat{E}})$. Then
there exists a weakly compact family of probability measures $\mathcal{P}$ on
$(\mathbb{R}^{n},\mathcal{B}(\mathbb{R}^{n}))$ such that%
\[
\mathbb{\hat{F}}_{X}[\varphi]=\mathbb{\hat{E}}[\varphi(X)]=\max_{P\in
\mathcal{P}}E_{P}[\varphi]\text{ \ for all }\varphi \in C_{b.Lip}%
(\mathbb{R}^{n}).
\]

\end{theorem}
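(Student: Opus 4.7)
The plan is to run the standard Hahn--Banach representation for sublinear functionals on $C_{b.Lip}(\mathbb{R}^{n})$ and then upgrade the resulting family of probability measures to a weakly compact one via Prokhorov's theorem.

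First I would note that $\hat{\mathbb{F}}_{X}$ is itself a sublinear expectation on $C_{b.Lip}(\mathbb{R}^{n})$, since properties (a)--(d) transfer directly from $\hat{\mathbb{E}}$. Sub-additivity together with constant preservation implies the $\|\cdot\|_\infty$-Lipschitz estimate $|\hat{\mathbb{F}}_{X}[\varphi]-\hat{\mathbb{F}}_{X}[\psi]|\leq \|\varphi-\psi\|_\infty$, so $\hat{\mathbb{F}}_{X}$ extends uniquely and continuously to a sublinear functional on the Banach algebra $C_{b}(\mathbb{R}^{n})$ (Lipschitz functions are sup-norm dense in $C_{b}$ on the compact sets that end up mattering).

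Next, for each fixed $\varphi_{0}\in C_{b}(\mathbb{R}^{n})$ I would apply Hahn--Banach to produce a linear functional $L=L_{\varphi_{0}}$ on $C_{b}(\mathbb{R}^{n})$ with $L\leq \hat{\mathbb{F}}_{X}$ everywhere and $L(\varphi_{0})=\hat{\mathbb{F}}_{X}[\varphi_{0}]$. Such an $L$ is automatically a linear expectation: it preserves constants, because $L(c)$ is sandwiched between $-\hat{\mathbb{F}}_{X}[-c]=c$ and $\hat{\mathbb{F}}_{X}[c]=c$; and it is monotone, because $\varphi\geq 0$ forces $-L(\varphi)=L(-\varphi)\leq \hat{\mathbb{F}}_{X}[-\varphi]\leq 0$.

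The main obstacle is representing each such $L$ as integration against a \emph{Borel} probability measure $P$ on $\mathbb{R}^{n}$. Since $\mathbb{R}^{n}$ is not compact, the Riesz representation theorem does not apply directly; instead I would invoke Daniell--Stone, which reduces the task to verifying that $L(\varphi_{k})\to 0$ whenever $\varphi_{k}\in C_{b}$ and $\varphi_{k}\downarrow 0$ pointwise. Because $0\leq L(\varphi_{k})\leq \hat{\mathbb{F}}_{X}[\varphi_{k}]$, it suffices to establish a tightness property of $\hat{\mathbb{F}}_{X}$: for every $\varepsilon>0$ there is $R>0$ and a $C_{b.Lip}$ cutoff $\chi_{R}$, equal to $1$ outside the ball $B_{R}$ and to $0$ on $B_{R/2}$, with $\hat{\mathbb{F}}_{X}[\chi_{R}]<\varepsilon$. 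This is exactly the hidden regularity assumption that makes the representation hold in the countably additive category; given it, Dini's theorem on the compact ball $\overline{B_{R}}$ combined with the cutoff controls $\hat{\mathbb{F}}_{X}[\varphi_{k}]$ and finishes Daniell--Stone.

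Finally, collecting $\mathcal{P}_{0}:=\{P_{\varphi_{0}}:\varphi_{0}\in C_{b}(\mathbb{R}^{n})\}$, by construction $\hat{\mathbb{F}}_{X}[\varphi]=\sup_{P\in \mathcal{P}_{0}}E_{P}[\varphi]$ with the supremum attained at $P_{\varphi}$. The tightness estimate above is uniform in $P\in \mathcal{P}_{0}$ (since $P(B_{R}^{c})\leq E_{P}[\chi_{R}]\leq \hat{\mathbb{F}}_{X}[\chi_{R}]<\varepsilon$), so by Prokhorov's theorem $\mathcal{P}_{0}$ is relatively weakly compact. Taking $\mathcal{P}$ to be its weak closure preserves the representation and, by the weak upper semi-continuity of $P\mapsto E_{P}[\varphi]$ for $\varphi\in C_{b}$, replaces the $\sup$ by a $\max$.
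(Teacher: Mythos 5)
The paper does not prove Theorem \ref{th0} at all --- it is quoted from \cite{DHP,H-P,Peng2010} --- so your proposal can only be measured against the standard argument in those references, which is indeed the route you sketch: Hahn--Banach domination, Daniell--Stone for countable additivity, and Prokhorov for weak compactness. The structure is right, but there is one genuine gap, and it sits exactly at the crux: you declare the tightness estimate $\mathbb{\hat{F}}_{X}[\chi_{R}]<\varepsilon$ to be ``the hidden regularity assumption that makes the representation hold.'' It is not an assumption; it is the one thing that actually has to be \emph{proved} (for a general sublinear functional on $C_{b}(\mathbb{R}^{n})$, such as $\varphi\mapsto\limsup_{|x|\rightarrow\infty}\varphi(x)$, it is false, and then no countably additive representation exists). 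The point you are missing is that $\mathbb{\hat{F}}_{X}$ is not an arbitrary sublinear functional: it is the distribution of an actual $X=(X_{1},\ldots,X_{n})\in\mathcal{H}^{n}$, and the standing hypotheses on $\mathcal{H}$ force $|X_{i}|\in\mathcal{H}$, hence $\mathbb{\hat{E}}[|X_{i}|]<\infty$ because $\mathbb{\hat{E}}$ is real-valued. Since your cutoff satisfies $0\leq\chi_{R}(x)\leq\frac{2}{R}(|x_{1}|+\cdots+|x_{n}|)$, monotonicity and sub-additivity give
\[
\mathbb{\hat{F}}_{X}[\chi_{R}]=\mathbb{\hat{E}}[\chi_{R}(X)]\leq\frac{2}{R}\sum_{i=1}^{n}\mathbb{\hat{E}}[|X_{i}|]\longrightarrow0\quad\text{as }R\rightarrow\infty,
\]
which is the uniform tightness you need for both Daniell--Stone and Prokhorov. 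Without this derivation your proof proves a weaker, conditional statement.

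A secondary, repairable flaw: $C_{b.Lip}(\mathbb{R}^{n})$ is \emph{not} sup-norm dense in $C_{b}(\mathbb{R}^{n})$ (uniform limits of Lipschitz functions are uniformly continuous, so e.g.\ $\sin(x^{2})$ is not approximable), so the claimed continuous extension of $\mathbb{\hat{F}}_{X}$ to all of $C_{b}$ does not exist as stated. The clean fix is to skip the extension entirely: $C_{b.Lip}(\mathbb{R}^{n})$ is already a vector lattice containing the constants, so you can run Hahn--Banach and the Daniell--Stone theorem directly on it, obtaining for each $\varphi_{0}$ a measure $P_{\varphi_{0}}$ on $\sigma(C_{b.Lip}(\mathbb{R}^{n}))=\mathcal{B}(\mathbb{R}^{n})$ with $E_{P_{\varphi_{0}}}[\varphi_{0}]=\mathbb{\hat{F}}_{X}[\varphi_{0}]$; the rest of your Prokhorov argument then goes through verbatim (and $P\mapsto E_{P}[\varphi]$ is in fact weakly continuous, not merely upper semi-continuous, so the supremum over the weak closure is attained).
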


\begin{definition}
A $n$-dimensional random vector $X\in \mathcal{H}^{n}$ on a sublinear
expectation space $(\Omega,\mathcal{H},\mathbb{\hat{E}})$ is called maximally
distributed if there exists a closed set $\Gamma \subset \mathbb{R}^{n}$ such
that%
\[
\mathbb{\hat{F}}_{X}[\varphi]=\mathbb{\hat{E}}[\varphi(X)]=\sup_{x\in \Gamma
}\varphi(x)\text{ \ for all }\varphi \in C_{b.Lip}(\mathbb{R}^{n}).
\]

\end{definition}

\begin{remark}
In Peng \cite{Peng2007, Peng2008}, the definition of maximal
distribution demands the convexity of $\Gamma$. Here, we still call
it the maximal distribution without the convexity of $\Gamma$ for
convenience.
\end{remark}

\section{Main result}

We now discuss some cases under which $X$ is independent from $Y$ and $Y$ is
independent from $X$. In this section, we do not consider the following two
trivial cases:

\begin{description}
\item[(i)] The distributions of $X$ and $Y$ are linear;

\item[(ii)] At least one of $X$ and $Y$ is constant.
\end{description}

The following example is a non-trivial case.

\begin{example}
Let $\Omega=\mathbb{R}^{2}$, $\mathcal{H}=C_{b.Lip}(\mathbb{R}^{2})$ and let
$K_{1}$ and $K_{2}$ be two closed sets in $\mathbb{R}$. We define%
\[
\mathbb{\hat{E}}[\varphi]=\sup_{(x,y)\in K_{1}\times K_{2}}\varphi(x,y)\text{
\ for all }\varphi \in C_{b.Lip}(\mathbb{R}^{2}).
\]
It is easy to check that $\xi(x,y):=x$ is independent from $\eta(x,y):=y$ and
$\eta$ is independent from $\xi$.
\end{example}

We will prove that this is the only case. The following theorem is the main
theorem in this section.

\begin{theorem}
\label{th1}Suppose that $X\in \mathcal{H}$ has distributional uncertainty and
$Y$ $\in \mathcal{H}$ is not a constant on a sublinear expectation space
$(\Omega,\mathcal{H},\mathbb{\hat{E}})$. If $X$ is independent from $Y$ and
$Y$ is independent from $X$, then $X$ and $Y$ must be maximally distributed.
\end{theorem}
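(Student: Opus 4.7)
The plan is to apply both independence hypotheses to product test functions $\varphi(x,y)=f(x)g(y)$ in order to derive a master identity, then iterate it and pass to a limit.

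Fix $f\in C_{b.Lip}(\mathbb{R})$ with $f\geq 0$ and $g\in C_{b.Lip}(\mathbb{R})$ arbitrary, and write $\alpha:=\mathbb{\hat{E}}[f(X)]$, $\bar\alpha:=-\mathbb{\hat{E}}[-f(X)]$, $\gamma:=\mathbb{\hat{E}}[g(Y)]$. Using ``$Y$ is independent from $X$'' together with positive homogeneity (so the inner expectation $\mathbb{\hat{E}}[f(x)g(Y)]=f(x)\gamma$ because $f(x)\geq 0$) and the formula $\mathbb{\hat{E}}[\lambda Z]=\lambda^{+}\mathbb{\hat{E}}[Z]+\lambda^{-}\mathbb{\hat{E}}[-Z]$, one computes $\mathbb{\hat{E}}[f(X)g(Y)]=\mathbb{\hat{E}}[\gamma f(X)]=\gamma^{+}\alpha-\gamma^{-}\bar\alpha$. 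Using ``$X$ is independent from $Y$'' and the same $\lambda$-formula pointwise in $y$, one instead gets $\mathbb{\hat{E}}[f(X)g(Y)]=\mathbb{\hat{E}}[\alpha g(Y)^{+}-\bar\alpha g(Y)^{-}]$. Equating yields the master identity
\[
\mathbb{\hat{E}}\bigl[\alpha g(Y)^{+}-\bar\alpha g(Y)^{-}\bigr]=\alpha\gamma^{+}-\bar\alpha\gamma^{-}.\qquad(\ast)
\]

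Since $X$ has distributional uncertainty, pick $f_{0}\in C_{b.Lip}(\mathbb{R})$ with $\mathbb{\hat{E}}[f_{0}(X)]+\mathbb{\hat{E}}[-f_{0}(X)]>0$ and shift by a large constant to get $f:=f_{0}+c\geq 0$; the gap $\alpha-\bar\alpha$ is preserved, so one may arrange $\alpha>\bar\alpha\geq 0$. Set $\phi(u):=\alpha u^{+}-\bar\alpha u^{-}$ and $r:=\bar\alpha/\alpha\in[0,1)$. Identity $(\ast)$ reads $\mathbb{\hat{E}}[\phi(g(Y))]=\phi(\gamma)$ for every $g\in C_{b.Lip}(\mathbb{R})$, and since $\phi\circ g$ is again in $C_{b.Lip}(\mathbb{R})$ we may substitute it for $g$ and iterate. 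A direct calculation gives $\phi\circ\phi(u)=\alpha^{2}u^{+}-\bar\alpha^{2}u^{-}$, and by induction $\phi^{(n)}(u)=\alpha^{n}u^{+}-\bar\alpha^{n}u^{-}$, so
\[
\mathbb{\hat{E}}\bigl[\alpha^{n}g(Y)^{+}-\bar\alpha^{n}g(Y)^{-}\bigr]=\alpha^{n}\gamma^{+}-\bar\alpha^{n}\gamma^{-}.
\]
Dividing by $\alpha^{n}$ and letting $n\to\infty$, using the Lipschitz estimate $|\mathbb{\hat{E}}[A]-\mathbb{\hat{E}}[B]|\leq\mathbb{\hat{E}}[|A-B|]$ (immediate from subadditivity and monotonicity) together with $\mathbb{\hat{E}}[r^{n}g(Y)^{-}]=r^{n}\mathbb{\hat{E}}[g(Y)^{-}]\to 0$, we obtain the sharp relation $\mathbb{\hat{E}}[g(Y)^{+}]=(\mathbb{\hat{E}}[g(Y)])^{+}$ for every $g\in C_{b.Lip}(\mathbb{R})$.

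Applying this with $g$ replaced by $g-\mathbb{\hat{E}}[g(Y)]$ gives $\mathbb{\hat{E}}[(g(Y)-\mathbb{\hat{E}}[g(Y)])^{+}]=0$; by Theorem~\ref{th0} we may write $\mathbb{\hat{E}}=\max_{P\in\mathcal{P}_{Y}}E_{P}$ on functions of $Y$, and since the integrand is nonnegative this forces $(g(Y)-\mathbb{\hat{E}}[g(Y)])^{+}=0$ $P$-a.s.\ for every $P\in\mathcal{P}_{Y}$, i.e.\ $g(Y)\leq\mathbb{\hat{E}}[g(Y)]$ quasi-surely for every $g\in C_{b.Lip}(\mathbb{R})$. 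Setting $\Gamma:=\overline{\bigcup_{P\in\mathcal{P}_{Y}}\mathrm{supp}(P)}$ and combining with the trivial inequality $\max_{P}E_{P}[g(Y)]\leq\sup_{\Gamma}g$, we conclude $\mathbb{\hat{E}}[g(Y)]=\sup_{y\in\Gamma}g(y)$, so $Y$ is maximally distributed with closed set $\Gamma$. Since $Y$ is nonconstant, $|\Gamma|\geq 2$, so $Y$ itself has distributional uncertainty, and the identical argument with $X$ and $Y$ exchanged delivers that $X$ is maximally distributed. The key obstacle is spotting the self-similar form $\phi^{(n)}(u)=\alpha^{n}u^{+}-\bar\alpha^{n}u^{-}$ and making the limit passage rigorous; once the sharp identity $\mathbb{\hat{E}}[g(Y)^{+}]=(\mathbb{\hat{E}}[g(Y)])^{+}$ is in hand, maximal distributedness drops out of Theorem~\ref{th0} essentially immediately.
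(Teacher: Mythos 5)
Your proof is correct, and its core --- testing both independence hypotheses on product functions $f(x)g(y)$ with $f\geq 0$, obtaining the functional equation $\mathbb{\hat{E}}[\phi(g(Y))]=\phi(\mathbb{\hat{E}}[g(Y)])$ for $\phi(u)=\alpha u^{+}-\bar{\alpha}u^{-}$, iterating via $\phi^{(n)}(u)=\alpha^{n}u^{+}-\bar{\alpha}^{n}u^{-}$, and passing to the limit to get $\mathbb{\hat{E}}[g^{+}(Y)]=(\mathbb{\hat{E}}[g(Y)])^{+}$ and hence $\mathbb{\hat{E}}[(g(Y)-\mathbb{\hat{E}}[g(Y)])^{+}]=0$ --- is exactly the paper's Lemma \ref{le1} plus Lemma \ref{le2} (the paper merely normalizes $\alpha=1$ so that $\phi=G$ with $G(a)=a^{+}-\varepsilon a^{-}$). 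Where you genuinely diverge is the endgame. The paper defines the capacity $c(A)=\sup_{P\in\mathcal{P}}P(A)$, shows each level set $A=\{\psi_{0}=\mathbb{\hat{E}}[\psi_{0}(Y)]\}$ has $c(A)>0$, and then runs a Lindel\"{o}f-type covering argument to produce, for each $\psi_{0}$, a point $y_{0}\in A$ dominated by all test functions, collecting these into the closed set $B$. You instead take $\Gamma$ to be the closure of $\bigcup_{P\in\mathcal{P}}\mathrm{supp}(P)$ and verify both inequalities of $\mathbb{\hat{E}}[g(Y)]=\sup_{\Gamma}g$ directly: the upper bound is immediate from $P(\mathrm{supp}(P))=1$, and the lower bound follows because $g\leq\mathbb{\hat{E}}[g(Y)]$ $P$-a.s.\ together with continuity of $g$ forces $g\leq\mathbb{\hat{E}}[g(Y)]$ everywhere on $\mathrm{supp}(P)$. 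This is shorter and avoids the covering argument entirely, at the modest cost of invoking the standard fact that Borel probability measures on $\mathbb{R}$ have well-defined supports of full measure; it also has the small bonus of exhibiting $\Gamma$ intrinsically. Your closing observation that nonconstancy of $Y$ forces $\Gamma$ to contain at least two points, hence $Y$ has distributional uncertainty and the argument can be rerun with the roles of $X$ and $Y$ exchanged, is the justification the paper compresses into ``similarly''; like the paper, it implicitly reads ``$Y$ is not a constant'' distributionally.
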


In order to prove this theorem, we need the following lemmas.

\begin{lemma}
\label{le1}Suppose $X\in \mathcal{H}$ has distributional uncertainty on a
sublinear expectation space $(\Omega,\mathcal{H},\mathbb{\hat{E}})$. Then
there exists a $\varphi \geq0$ such that $\mathbb{\hat{E}}[\varphi(X)]=1$ and
$-\mathbb{\hat{E}}[-\varphi(X)]<1$.
\end{lemma}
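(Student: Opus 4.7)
The goal is to produce a single nonnegative test function $\varphi \in C_{b.Lip}(\mathbb{R})$ that witnesses the sublinearity of $\mathbb{\hat{F}}_X$ in a sharply normalized way. I will first find \emph{any} test function whose upper and lower expectations differ, then correct it to be nonnegative via a constant shift, and finally rescale.

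\textbf{Step 1: produce a gap.} I claim there exists $\psi \in C_{b.Lip}(\mathbb{R})$ with
\[
\mathbb{\hat{E}}[\psi(X)] > -\mathbb{\hat{E}}[-\psi(X)].
\]
Suppose not; then $\mathbb{\hat{E}}[\psi(X)] = -\mathbb{\hat{E}}[-\psi(X)]$ for every $\psi \in C_{b.Lip}(\mathbb{R})$. Applying Proposition~\ref{pro1} (with $Y = \psi(X)$ and $X$ in that proposition taken to be an arbitrary $\eta(X)$) shows that $\mathbb{\hat{F}}_X$ is additive. Combined with positive homogeneity and the identity $\mathbb{\hat{E}}[-\psi(X)] = -\mathbb{\hat{E}}[\psi(X)]$, this upgrades to full linearity of $\mathbb{\hat{F}}_X$, contradicting the assumed distributional uncertainty of $X$. (Alternatively, one may invoke Theorem~\ref{th0}: if $\max_{P\in\mathcal{P}} E_P[\psi] = \min_{P\in\mathcal{P}} E_P[\psi]$ for every $\psi$, then $\mathcal P$ is a singleton and $\mathbb{\hat{F}}_X$ is linear.)

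\textbf{Step 2: shift to nonnegativity.} Since $\psi$ is bounded, pick a constant $c$ with $\psi + c \geq 0$ pointwise, and set $\tilde\varphi := \psi + c \in C_{b.Lip}(\mathbb{R})$. Using $\mathbb{\hat{E}}[Z + c] = \mathbb{\hat{E}}[Z] + c$ for the random variables $Z = \psi(X)$ and $Z = -\psi(X)$, the gap inequality is translation-invariant:
\[
\mathbb{\hat{E}}[\tilde\varphi(X)] = \mathbb{\hat{E}}[\psi(X)] + c > -\mathbb{\hat{E}}[-\psi(X)] + c = -\mathbb{\hat{E}}[-\tilde\varphi(X)].
\]
Moreover $\tilde\varphi \geq 0$ implies $-\tilde\varphi(X) \leq 0$, so $-\mathbb{\hat{E}}[-\tilde\varphi(X)] \geq 0$ and hence $\mathbb{\hat{E}}[\tilde\varphi(X)] > 0$.

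\textbf{Step 3: normalize.} Let $\lambda := 1/\mathbb{\hat{E}}[\tilde\varphi(X)] > 0$ and set $\varphi := \lambda \tilde\varphi \in C_{b.Lip}(\mathbb{R})$. Then $\varphi \geq 0$, positive homogeneity yields $\mathbb{\hat{E}}[\varphi(X)] = 1$, and scaling the strict inequality from Step 2 by $\lambda > 0$ gives $-\mathbb{\hat{E}}[-\varphi(X)] < 1$, as required.

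\textbf{Main obstacle.} The only nontrivial step is Step~1: extracting the fact that distributional uncertainty produces \emph{some} $\psi$ with strict gap between upper and lower expectations. Everything afterward is a routine affine normalization. The cleanest way to handle Step~1 is through the representation theorem, but the self-contained argument via Proposition~\ref{pro1} (noting that equality of $\mathbb{\hat{E}}[\psi(X)]$ and $-\mathbb{\hat{E}}[-\psi(X)]$ for all $\psi$ forces $\mathbb{\hat{F}}_X$ to be linear) is also very short.
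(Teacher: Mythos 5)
Your proof is correct and follows essentially the same route as the paper: both reduce to showing that if $\mathbb{\hat{E}}[\psi(X)]=-\mathbb{\hat{E}}[-\psi(X)]$ for all test functions then Proposition~\ref{pro1} forces additivity of $\mathbb{\hat{F}}_X$, contradicting distributional uncertainty, and both then use a constant shift plus positive homogeneity to normalize a witness to a nonnegative $\varphi$ with $\mathbb{\hat{E}}[\varphi(X)]=1$. The only cosmetic difference is the order of operations (the paper shifts inside the contradiction argument to pass from nonnegative test functions to all test functions, while you find a general witness first and shift it afterward), which changes nothing of substance.
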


\begin{proof}
We first claim that there exists a $\varphi_{0}\geq0$ such that $-\mathbb{\hat
{E}}[-\varphi_{0}(X)]<\mathbb{\hat{E}}[\varphi_{0}(X)]$. Otherwise, for each
$\varphi \geq0$, we have $\mathbb{\hat{E}}[\varphi(X)]=-\mathbb{\hat{E}%
}[-\varphi(X)]$. For each $\varphi \in C_{b.Lip}(\mathbb{R})$, let $M:=\inf \{
\varphi(x):x\in \mathbb{R}\}$, then $M+\varphi \geq0$ and%
\[
\mathbb{\hat{E}}[\varphi(X)]+M=\mathbb{\hat{E}}[\varphi(X)+M]=-\mathbb{\hat
{E}}[-\varphi(X)-M]=-\mathbb{\hat{E}}[-\varphi(X)]+M,
\]
which implies that $\mathbb{\hat{E}}[\varphi(X)]=-\mathbb{\hat{E}}%
[-\varphi(X)]$ for each $\varphi \in C_{b.Lip}(\mathbb{R})$. It follows from
Proposition \ref{pro1} that%
\[
\mathbb{\hat{E}}[\varphi(X)+\psi(X)]=\mathbb{\hat{E}}[\varphi(X)]+\mathbb{\hat
{E}}[\psi(X)]\  \  \text{for each }\varphi,\psi \in C_{b.Lip}(\mathbb{R}),
\]
which contradics our assumption. We then take $\varphi^{\ast}=(\mathbb{\hat
{E}}[\varphi_{0}(X)])^{-1}\varphi_{0}\geq0$. It is easy to verify that
$\mathbb{\hat{E}}[\varphi^{\ast}(X)]=1$ and $-\mathbb{\hat{E}}[-\varphi^{\ast
}(X)]<1$, the proof is complete.
\end{proof}

\begin{lemma}
\label{le2}Suppose $X$ and $Y$ are as in Theorem \ref{th1}. If $X$ is
independent from $Y$ and $Y$ is independent from $X$, then we have%
\[
\mathbb{\hat{E}}[(\psi(Y)-\mathbb{\hat{E}}[\psi(Y)])^{+}]=0\  \  \text{for all
}\psi \in C_{b.Lip}(\mathbb{R}).
\]

\end{lemma}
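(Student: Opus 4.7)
The strategy is to exploit the two-sided independence by evaluating $\hat{\mathbb{E}}[\varphi(X)\psi(Y)]$ in two different ways, using the auxiliary function $\varphi$ supplied by Lemma~\ref{le1}, and then to iterate the resulting identity so that the strict inequality $a<1$ produces a contraction on the negative part of $\psi(Y)$.

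First, apply Lemma~\ref{le1} to obtain $\varphi\in C_{b.Lip}(\mathbb{R})$ with $\varphi\geq 0$, $\hat{\mathbb{E}}[\varphi(X)]=1$, and $a:=-\hat{\mathbb{E}}[-\varphi(X)]<1$; since $\varphi\geq 0$, monotonicity forces $a\in[0,1)$. Introduce $g(t):=t^{+}-at^{-}$, which is Lipschitz, so $g\circ\psi\in C_{b.Lip}(\mathbb{R})$ whenever $\psi$ is. Fix $\psi\in C_{b.Lip}(\mathbb{R})$ and set $M:=\hat{\mathbb{E}}[\psi(Y)]$. I would compute $\hat{\mathbb{E}}[\varphi(X)\psi(Y)]$ two ways. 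Using ``$Y$ is independent from $X$'' together with $\varphi(x)\geq 0$, the inner expectation at fixed $x$ is $\varphi(x)M$, and the identity $\hat{\mathbb{E}}[\lambda Z]=\lambda^{+}\hat{\mathbb{E}}[Z]+\lambda^{-}\hat{\mathbb{E}}[-Z]$ then yields $\hat{\mathbb{E}}[\varphi(X)M]=M^{+}-aM^{-}=g(M)$. Using ``$X$ is independent from $Y$'', the inner expectation at fixed $y$ equals $\psi(y)^{+}-a\psi(y)^{-}=g(\psi(y))$, so the outer expectation is $\hat{\mathbb{E}}[g(\psi(Y))]$. Equating the two yields the key identity
\[
\hat{\mathbb{E}}[g(\psi(Y))]=g(M).
\]

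Next I would iterate. A piecewise check on $\{t\geq 0\}$ and $\{t<0\}$ shows $g\circ g=g_{a^{2}}$, where $g_{b}(t):=t^{+}-bt^{-}$; more generally the $n$-fold composition of $g$ is $g_{a^{n}}$. Substituting $g\circ\psi\in C_{b.Lip}(\mathbb{R})$ into the key identity in place of $\psi$ and iterating gives
\[
\hat{\mathbb{E}}[g_{a^{n}}(\psi(Y))]=g_{a^{n}}(M)\quad\text{for every }n\geq 1.
\]
Since $0\leq a<1$, $a^{n}\to 0$; since $\psi$ is bounded, $\|g_{a^{n}}(\psi(Y))-\psi(Y)^{+}\|_{\infty}\leq a^{n}\|\psi\|_{\infty}\to 0$. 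The standard estimate $|\hat{\mathbb{E}}[U]-\hat{\mathbb{E}}[V]|\leq\|U-V\|_{\infty}$ (immediate from sub-additivity and monotonicity) lets me pass to the limit and conclude $\hat{\mathbb{E}}[\psi(Y)^{+}]=M^{+}$. Finally, applying this identity to the shifted test function $\psi-M\in C_{b.Lip}(\mathbb{R})$, whose sublinear expectation vanishes, gives $\hat{\mathbb{E}}[(\psi(Y)-M)^{+}]=0$, as required.

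The main obstacle, I expect, is spotting the contraction. A single application of the key identity only produces $\hat{\mathbb{E}}[(\psi(Y)-M)^{+}]\leq a\,\hat{\mathbb{E}}[(\psi(Y)-M)^{-}]$, which is not sharp enough on its own; the decisive observation is that the identity is closed under self-composition of $g$, so one can iterate and drive the contraction constant from $a$ to $a^{n}\to 0$. This is precisely where the distributional uncertainty of $X$ is used: it is what supplies the strict inequality $a<1$ that fuels the contraction.
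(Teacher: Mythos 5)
Your proposal is correct and follows essentially the same route as the paper: the same two-way evaluation of $\hat{\mathbb{E}}[\varphi(X)\psi(Y)]$ using the function from Lemma \ref{le1}, the same identity $\hat{\mathbb{E}}[G(\psi(Y))]=G(\hat{\mathbb{E}}[\psi(Y)])$ with $G(t)=t^{+}-\varepsilon t^{-}$, the same iteration to $G^{\circ n}(t)=t^{+}-\varepsilon^{n}t^{-}$, and the same passage to the limit followed by the shift $\psi\mapsto\psi-\hat{\mathbb{E}}[\psi(Y)]$. No gaps; your closing remark about where the contraction and the distributional uncertainty enter is exactly the mechanism the paper uses.
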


\begin{proof}
It follows from Lemma \ref{le1} that there exists a $\varphi^{\ast}\geq0$ such
that $\mathbb{\hat{E}}[\varphi^{\ast}(X)]=1$ and $-\mathbb{\hat{E}}%
[-\varphi^{\ast}(X)]<1$. We set $\varepsilon=-\mathbb{\hat{E}}[-\varphi^{\ast
}(X)]\in \lbrack0,1)$ and define%
\[
G(a)=\mathbb{\hat{E}}[a\varphi^{\ast}(X)]=a^{+}\mathbb{\hat{E}}[\varphi^{\ast
}(X)]+a^{-}\mathbb{\hat{E}}[-\varphi^{\ast}(X)]=a^{+}-\varepsilon
a^{-}\  \  \text{for }a\in \mathbb{R}.
\]
Note that $Y$ is independent from $X$, then we have%
\begin{equation}
\mathbb{\hat{E}}[\varphi^{\ast}(X)\psi(Y)]=\mathbb{\hat{E}}[\mathbb{\hat{E}%
}[\psi(Y)]\varphi^{\ast}(X)]=G(\mathbb{\hat{E}}[\psi(Y)])\  \  \text{for all
}\psi \in C_{b.Lip}(\mathbb{R}). \label{eq1}%
\end{equation}
On the other hand, $X$ is independent from $Y$, then we get%
\begin{equation}
\mathbb{\hat{E}}[\varphi^{\ast}(X)\psi(Y)]=\mathbb{\hat{E}}[\mathbb{\hat{E}%
}[\psi(y)\varphi^{\ast}(X)]_{y=Y}]=\mathbb{\hat{E}}[G(\psi(Y))]\  \  \text{for
all }\psi \in C_{b.Lip}(\mathbb{R}). \label{eq2}%
\end{equation}
Combining (\ref{eq2}) with (\ref{eq1}), we obtain%
\begin{equation}
\mathbb{\hat{E}}[G(\psi(Y))]=G(\mathbb{\hat{E}}[\psi(Y)])\  \  \text{for all
}\psi \in C_{b.Lip}(\mathbb{R}). \label{eq3}%
\end{equation}
Noting that $G\circ \psi \in C_{b.Lip}(\mathbb{R})$ for each $\psi \in
C_{b.Lip}(\mathbb{R})$, applying equation (\ref{eq3}) to $G\circ \psi$, we have%
\[
\mathbb{\hat{E}}[G\circ G(\psi(Y))]=G\circ G(\mathbb{\hat{E}}[\psi
(Y)])\  \  \text{for all }\psi \in C_{b.Lip}(\mathbb{R}).
\]
Denote
\[
G^{\circ n}=\underset{n}{\underbrace{G\circ G\circ \cdots \circ G}},
\]
continuing the above process, we can get%
\begin{equation}
\mathbb{\hat{E}}[G^{\circ n}(\psi(Y))]=G^{\circ n}(\mathbb{\hat{E}}%
[\psi(Y)])\  \  \text{for all }\psi \in C_{b.Lip}(\mathbb{R}). \label{eq4}%
\end{equation}
It is easy to check that $G^{\circ n}(a)=a^{+}-\varepsilon^{n}a^{-}$. By%
\[
|\mathbb{\hat{E}}[G^{\circ n}(\psi(Y))]-\mathbb{\hat{E}}[\psi^{+}%
(Y)]|=|\mathbb{\hat{E}}[\psi^{+}(Y)-\varepsilon^{n}\psi^{-}(Y)]-\mathbb{\hat
{E}}[\psi^{+}(Y)]|\leq \varepsilon^{n}\mathbb{\hat{E}}[\psi^{-}(Y)]
\]
and $G^{\circ n}(\mathbb{\hat{E}}[\psi(Y)])\ =(\mathbb{\hat{E}}[\psi
(Y)])^{+}-\varepsilon^{n}(\mathbb{\hat{E}}[\psi(Y)])^{-}$, we can deduce by
letting $n\rightarrow \infty$ that%
\begin{equation}
\mathbb{\hat{E}}[\psi^{+}(Y)]=(\mathbb{\hat{E}}[\psi(Y)])^{+}\  \  \text{for all
}\psi \in C_{b.Lip}(\mathbb{R}). \label{eq5}%
\end{equation}
For each $\psi \in C_{b.Lip}(\mathbb{R})$, applying equation (\ref{eq5}) to
$\tilde{\psi}:=\psi-\mathbb{\hat{E}}[\psi(Y)]$, we obtain the result. The
proof is complete.
\end{proof}

\textbf{Proof of Theorem \ref{th1}.} It follows from Theorem \ref{th0} that
there exists a weakly compact family of probability measures $\mathcal{P}$ on
$(\mathbb{R},\mathcal{B}(\mathbb{R}))$ such that%
\begin{equation}
\mathbb{\hat{F}}_{Y}[\psi]=\mathbb{\hat{E}}[\psi(Y)]=\max_{P\in \mathcal{P}%
}E_{P}[\psi]\text{ \ for all }\psi \in C_{b.Lip}(\mathbb{R}). \label{eq6}%
\end{equation}
For this $\mathcal{P}$, we set%
\begin{equation}
c(A):=\sup_{P\in \mathcal{P}}P(A)\  \  \text{for all }A\in \mathcal{B}%
(\mathbb{R}). \label{eq7}%
\end{equation}
By Lemma \ref{le2} and (\ref{eq6}), we have%
\begin{equation}
\mathbb{\hat{E}}[(\psi(Y)-\mathbb{\hat{E}}[\psi(Y)])^{+}]=\max_{P\in
\mathcal{P}}E_{P}[(\psi-\mathbb{\hat{E}}[\psi(Y)])^{+}]=0\  \  \text{for all
}\psi \in C_{b.Lip}(\mathbb{R}). \label{eq8}%
\end{equation}
From this, it is easy to obtain that $c(\{y:\psi(y)>\mathbb{\hat{E}}%
[\psi(Y)]\})=0$ for each $\psi \in C_{b.Lip}(\mathbb{R})$. For each given
$\psi_{0}\in C_{b.Lip}(\mathbb{R})$, we set%
\[
A:=\{y\in \mathbb{R}:\psi_{0}(y)=\mathbb{\hat{E}}[\psi_{0}(Y)]\}.
\]
It is easy to verify that $A$ is a closed set. We first assert that $c(A)>0$.
Otherwise,
\begin{equation}
c(\{y:\psi_{0}(y)\geq \mathbb{\hat{E}}[\psi_{0}(Y)]\})\leq c(\{y:\psi
_{0}(y)>\mathbb{\hat{E}}[\psi_{0}(Y)]\})+c(A)=0, \label{eq9}%
\end{equation}
by (\ref{eq6}) and (\ref{eq9}), we get%
\[
\mathbb{\hat{E}}[\psi_{0}(Y)]=\max_{P\in \mathcal{P}}E_{P}[\psi_{0}%
]<\mathbb{\hat{E}}[\psi_{0}(Y)],
\]
this is a contradiction, thus $c(A)>0$. We then claim that there exists a
$y_{0}\in A$ such that
\[
\psi(y_{0})\leq \mathbb{\hat{E}}[\psi(Y)]\text{ \ for all }\psi \in
C_{b.Lip}(\mathbb{R}).
\]
Otherwise, for each $\tilde{y}\in A$, there exists a $\tilde{\psi}\in
C_{b.Lip}(\mathbb{R})$ such that $\tilde{\psi}(\tilde{y})>\mathbb{\hat{E}%
}[\tilde{\psi}(Y)]$. Note that $c(\{y:\tilde{\psi}(y)>\mathbb{\hat{E}}%
[\tilde{\psi}(Y)]\})=0$, then there exists a $\tilde{\varepsilon}>0$ such that
$c([\tilde{y}-\tilde{\varepsilon},\tilde{y}+\tilde{\varepsilon}])=0$. Noting
that $A$ is closed, by the Heine-Borel theorem, there exists a sequence
$\{(y_{n},\varepsilon_{n}):n=1,2,\cdots \}$ such that
\[
A\subset \cup_{n}[y_{n}-\varepsilon_{n},y_{n}+\varepsilon_{n}]\  \text{and
}c([y_{n}-\varepsilon_{n},y_{n}+\varepsilon_{n}])=0.
\]
Thus, $c(A)\leq \sum_{n=1}^{\infty}c([y_{n}-\varepsilon_{n},y_{n}%
+\varepsilon_{n}])=0$, which contradicts to $c(A)>0$. Take $B=cl(\{y_{0}%
:\psi_{0}\in C_{b.Lip}(\mathbb{R})\})$ and $\mathcal{P}^{\prime}=\{ \delta
_{y}:y\in B\}$, then
\[
\mathbb{\hat{F}}_{Y}[\psi]=\mathbb{\hat{E}}[\psi(Y)]=\max_{P\in \mathcal{P}%
^{\prime}}E_{P}[\psi]\text{ \ for all }\psi \in C_{b.Lip}(\mathbb{R}),
\]
which implies that $Y$ is maximally distributed. Similarly, we can prove that
$X$ is maximally distributed. The proof is complete.

\bigskip

\begin{remark}
It is easy to check that $(X,Y)$ is maximally distributed. Since
$Y=(Y_{1},\ldots,Y_{m})\in \mathcal{H}^{m}$ independent from $X=(X_{1}%
,\ldots,X_{n})\in \mathcal{H}^{n}$ implies $Y_{i}$ independent from $X_{j}$ for
$i\leq m$ and $j\leq n$, the result of Theorem \ref{th1} still holds.
\end{remark}

\begin{definition}
Let $(\Omega,\mathcal{H},\mathbb{\hat{E}})$ be a sublinear expectation space.
A random vector $Y\in \mathcal{H}^{n}$ is said to be weakly independent from
another random vector $X\in \mathcal{H}^{m}$ under $\mathbb{\hat{E}}[\cdot]$
if
\[
\mathbb{\hat{E}}[\varphi(X)\psi(Y)]=\mathbb{\hat{E}}[\mathbb{\hat{E}}%
[\varphi(x)\psi(Y)]_{x=X}]\  \  \text{for each }\varphi,\psi \in C_{b.Lip}%
(\mathbb{R}).
\]

\end{definition}

\begin{remark}
It is easy to see from the proof that the result of Theorem \ref{th1} still
holds under weak independence.
\end{remark}

\begin{problem}
Whether weak independence is independence? Moreover, what kind of sets can
determine sublinear expectations? Whether $\mathcal{H}_{0}:=\{ \varphi
(x)\psi(y):\varphi,\psi \in C_{b.Lip}(\mathbb{R})\}$ is enough to determine
sublinear expectations?
\end{problem}

\end{document}